\newtheorem{theorem}{Theorem}[section]
\newtheorem{lemma}[theorem]{Lemma}
\newtheorem{proposition}[theorem]{Proposition}
\theoremstyle{definition}
\newtheorem{definition}{definition}[section]
\theoremstyle{remark}
\newtheorem{remark}[definition]{Remark}
\numberwithin{equation}{section}
\newcommand{\norm}[1]{\left\lVert#1\right\rVert}
\newcommand{\R}{\mathbb{R}}
\DeclareMathOperator{\dist}{dist}
\newcommand\res{\mathop{\hbox{\vrule height 7pt width .3pt depth 0pt\vrule height .3pt width 5pt depth 0pt}}\nolimits}
\title[Second derivatives a.e. for rank-one convex functions]{A note on the a.e. second-order differentiability of rank-one convex functions}
\author{Jonas Hirsch}
\begin{document}
\begin{abstract}
    In the Euclidean setting, the well-known Alexandrov theorem states that convex functions are twice differentiable almost everywhere. In this note, we extend this theorem to rank-one convex functions. Our approach is novel in that it draws more from viscosity techniques developed in the context of fully nonlinear elliptic equations. 
    As a byproduct, the original Alexandrov theorem can essentially be reduced to the a.e. differentiability of one-dimensional monotone functions, as presented in the appendix.
\end{abstract}
\maketitle
\section{Introduction}
The aim of this very short note is to show that rank-one convex functions are a.e. second order differentiable:

\begin{proposition}\label{prop.alexandrov}
Let $f\colon B_1 \subset \R^{m\times n} \to \R$ be a bounded, rank-one convex function, then $f$ is second order differentiable for a.e. $x \in B_{\nicefrac{1}{2}}$.
\end{proposition}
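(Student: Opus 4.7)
The plan is to combine viscosity-style paraboloid touching estimates with the one-dimensional convexity of $f$ along rank-one directions, in the spirit of the Caffarelli approach to $C^{1,\alpha}$ and $W^{2,p}$ regularity for viscosity solutions. First, since $f$ is bounded and rank-one convex, it is in particular separately convex (each coordinate direction $e_i \otimes e_j$ has rank one), and a standard one-dimensional argument then shows that $f$ is locally Lipschitz on $B_{1/2}$; by Rademacher's theorem $f$ is differentiable a.e.\ there.

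For the second-order statement I would introduce, for each $x \in B_{1/2}$, the upper paraboloid opening
\[
\bar\theta(x) \;=\; \inf\Bigl\{\theta \geq 0 : \exists\, p \in \R^{m \times n},\ f(y) \leq f(x) + \langle p, y - x\rangle + \tfrac{\theta}{2}|y - x|^2 \text{ for all } y \in B_1\Bigr\},
\]
and aim to prove $\bar\theta(x) < \infty$ for a.e.\ $x$. The natural tool is an Alexandrov--Bakelman--Pucci (ABP) type measure estimate, asserting that $|\{x \in B_{1/2} : \bar\theta(x) > \theta\}| \to 0$ as $\theta \to \infty$. Following the viscosity playbook, I would pass to a sup-convolution $f^\varepsilon$, which is semiconvex and admits a classical second derivative on its downward-paraboloid contact set; the main novelty must lie in estimating this contact-set measure using only rank-one convexity, which I would attempt by slicing along rank-one foliations of $\R^{m\times n}$ and applying the one-dimensional differentiability theorem on each fibre.

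Once $\bar\theta(x_0) < \infty$ and $f$ is differentiable at $x_0$ with gradient $p$, the upper paraboloid immediately gives $f(x_0 + h) \leq f(x_0) + \langle p, h\rangle + \tfrac{\bar\theta(x_0)}{2}|h|^2$ in every direction. For the matching lower expansion, the one-dimensional convexity of $t \mapsto f(x_0 + tR)$ along each rank-one $R$, combined with Fubini over rank-one foliations and the classical fact that monotone functions are a.e.\ differentiable, produces second derivatives along every rank-one direction for a.e.\ $x_0$; assembling these into a symmetric bilinear form on $\R^{m\times n}$ and controlling the interpolation to non-rank-one directions by the upper paraboloid bound should yield the full second-order Taylor expansion. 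The principal obstacle is the ABP-type finiteness of $\bar\theta$ in the second step: the classical ABP exploits full convexity via supporting hyperplanes in \emph{every} direction, while rank-one convex functions admit supporting lines only along rank-one directions, so a genuinely new rank-one substitute for ABP---one that, as advertised in the abstract, reduces to the a.e.\ differentiability of one-dimensional monotone functions---appears to be the crux.
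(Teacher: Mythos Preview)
Your outline has the right skeleton (upper paraboloid a.e., then a matching lower bound, then differentiability of the gradient), but the step you flag as the crux is in fact where you miss the key observation. You propose to invent a ``rank-one ABP'' to show $|\{\bar\theta>\theta\}|\to 0$, and you read the abstract's line about one-dimensional monotone functions as a hint toward that. That line refers to the appendix (classical convex Alexandrov), not to the rank-one case. For the rank-one case no new ABP is needed: since each coordinate direction $e_i\otimes e_j$ has rank one, rank-one convexity gives $\partial_{x_{ij}}^2 f\ge 0$ in the viscosity sense, hence $\Delta f\ge 0$. Thus $f$ is a viscosity subsolution of a uniformly elliptic equation, and Lin's interior $W^{2,\epsilon}$ estimate applies off the shelf to give $|\{\overline{\Theta}_f>t\}\cap B_{1/2}|\lesssim t^{-\epsilon}$. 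Your sup-convolution/slicing route might be salvageable, but it is not needed and you give no actual mechanism for it.

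Your third step is also underspecified and diverges from what the paper does. ``Fubini over rank-one foliations plus monotone differentiability'' would at best produce second derivatives along a countable family of rank-one directions at a.e.\ point; assembling these into a full Hessian and controlling the error in non-rank-one directions is not automatic (rank-one matrices span but do not form a subspace, and consistency of the directional second derivatives is not given for free). The paper instead proves a pointwise lemma: if a rank-one convex $f$ with $f(x_0)=0$, $Df(x_0)=0$ is touched from above by $\tfrac{A}{2}|x-x_0|^2$, then $f(x)\ge -C\tfrac{A}{2}|x-x_0|^2$. The proof is a short induction on the columns of $x$, writing $x_i=\tfrac12 x_{i+1}+\tfrac12(x_{i+1}-2d_{i+1})$ with $d_{i+1}$ the rank-one matrix built from the $(i{+}1)$-th column. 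This two-sided quadratic bound yields $|Df(x)-Df(x_0)|\le CA|x-x_0|$ on the good set $\{\overline{\Theta}_f\le A\}$, and then a Mal\'y/Jensen sandwich with cone inf/sup-convolutions shows each $\partial_{x_{ij}} f$ is a.e.\ differentiable there; the second-order Taylor expansion follows by the fundamental theorem of calculus.
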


Let us shortly recall that a function $f\colon B_1 \to \R$ is called rank-one convex if its restrictions to line segments contained in $B_1$ in rank-one  directions are convex i.e. 
\[ t\mapsto f(x+t(a\otimes b)) \text{ is convex } \forall x\in B_1, a\in \R^m,b\in \R^n \text{ on the interval of all $t$ with } |t||a||b|<\dist(x,\partial B_1)\,. \]
\begin{remark}\label{rem.separate convex}

With a few minor modifications to Lemma \ref{lem.upper lower bound}, the same conclusion can be obtained for separate convex functions. This result may seem surprising at first due to the work of Conti, Faraco, Maggi and M\"uller \cite{ContiFaracoMaggiMueller2005} and Kirchheim and Kristensen \cite{KirchheimKristensen2016}. This shows that almost everywhere second-order differentiability is not related to whether the Hessian is a measure. 
\end{remark}

\section{proof}
 
\subsubsection*{Ingredient 1: $f$ has a paraboloid tangent from above a.e. in $B_1$}\,

\noindent Since for any $x \in \R^{m\times n}$ and any $i = 1,\dotsc, m, j=1,\dotsc,n$ one has $x_{ij}=(e_i\otimes e_j)\colon x $, the rank-one convexity implies that in the weak and viscosity sense \[\partial^2_{x_{ij}}f(x)= \frac{d^2}{dt^2}|_{t=0} f(x+t(e_i\otimes e_j))\ge 0\footnote{ in case of separate convexity one only uses $(e_i \otimes e_i), i=1,\dotsc, n$}\,.\]
Hence we deduce that $f$ is sub-harmonic in the viscosity sense i.e. 
\[ \Delta f(x) \ge 0\,.\] 

\begin{remark}
     The proposition also holds true in the case of $B_1 \subset \R^{n\times n}_{\operatorname{sym}}$. In this case, the rank-one directions are taken as $r_{ij}= (e_i+e_j) \otimes (e_i+e_j)$ for $i,j =\{1,\dots,n\}$. These directions provide a basis since $2\operatorname{sym}(e_i\otimes e_j)=r_{ij}-r_{ii}-r_{jj}$. We can now replace the Laplacian with the following two-homogeneous elliptic coefficient operator: 
    \[L f (x)= \sum_{i,j} \partial^2_{r_{ij}}f(x)= \sum_{(kl),(mn)} a^{(kl)(mn)} \partial_{x_{kl}\,x_{mn}}f(x),\]
    where $a = \sum_{i,j} r_{ij} \otimes r_{ij}$.
    
\end{remark}

The statements concerning the viscosity can be verified by mollification since rank-one convexity is preserved under mollification and viscosity solutions are closed under uniform convergence. 

Thus, we can apply Lin's ``classical'' interior $W^{2,\epsilon}$ estimate for viscosity subsolutions to fully nonlinear uniformly elliptic equations, e.g. \cite[Theorem 2.1]{Mooney2019} originally \cite{Lin1986},
\begin{equation}\label{e.interior W2epsilon}
    \left|\left\{\overline{\Theta}_f> C \norm{f}_{L^\infty(B_1)}t\right\}\cap B_{\nicefrac{1}{2}}\right|\lesssim t^{-\epsilon}\,.
\end{equation}

To clarify the used notation in the above estimate: One says that a paraboloid $P$ has opening $A$ if $D^2P= A\,I$. For a continuous function $u \in C(\overline{\Omega})$ one sets $\overline{\Theta}_u(x)$ to be the smallest  $a\ge 0$ such that a paraboloid of opening $a$ is tangent from above to $u$ in $\Omega$ at $x$ i.e. there is $P$ with $D^2P = a I$ such that $P \ge u$ in $\overline{\Omega}$ and $u(x)=P(x)$. In the inequality above we have taken the domain to be $\overline{B}_1$.




\subsubsection*{Ingredient 2: upper bound implies lower bound}\,

\noindent
In this section we want to show the following Lemma:
\begin{lemma}\label{lem.upper lower bound}
There is a dimensional constant $C=C(n)$ such that if a rank-one convex function $f$ satisfies $f(x_0)=0, Df(x_0)=0$ and there is a non-decreasing $g \in C^0(\R)$ such that $G(x)=g(|x-x_0|)$ is tangent from above to $f$ in $\Omega$ at $x_0$ then 
\begin{equation}\label{e.lower bound}
    f(x)\ge - C \,G(x) \quad \forall x \in \Omega\,.
\end{equation}
\end{lemma}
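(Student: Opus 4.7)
The plan is to chain together one-dimensional convexity inequalities along a short sequence of rank-one segments connecting $x_0$ to an arbitrary $x \in \Omega$, using reflections to convert the upper bound $G$ into a lower bound on $f$. To ensure the final constant is dimensional and the error is controlled by $G(x)$ itself (rather than $G$ evaluated at some larger radius), it is crucial that the chain be as short as possible and that every vertex lie within Frobenius distance $|x-x_0|$ of $x_0$.

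The first step is to decompose the displacement by the singular value decomposition,
\[ x - x_0 \;=\; \sum_{k=1}^K \sigma_k\, (u_k \otimes v_k), \qquad K = \operatorname{rank}(x-x_0) \le \min(m,n), \]
with $\{u_k\} \subset \R^m$ and $\{v_k\} \subset \R^n$ orthonormal. Setting $y_0 := x_0$, $y_k := y_{k-1} + \sigma_k(u_k \otimes v_k)$, and the reflected points $z_k := y_{k-1} - \sigma_k(u_k \otimes v_k)$, orthonormality yields $|y_k - x_0|^2 \le |x-x_0|^2$ and $|z_k - x_0|^2 \le |x-x_0|^2$. Monotonicity of $g$ then forces $G(y_k), G(z_k) \le G(x)$. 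In the intended application with $x_0 \in B_{\nicefrac{1}{2}}$ and $\Omega = B_1$, these auxiliary points automatically remain inside $\Omega$.

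I would then prove by induction that $f(y_k) \ge -C_k\, G(x)$ with $C_0 = 0$ and $C_k = 2C_{k-1}+1$. For $k=1$, the segment passes through $x_0$, and $\phi_1(t) := f(x_0 + t \sigma_1 u_1 \otimes v_1)$ is convex with $\phi_1(0) = \phi_1'(0) = 0$ (by $f(x_0)=0$ and $Df(x_0)=0$), hence $\phi_1 \ge 0$ and $f(y_1) \ge 0$. For $k \ge 2$, the one-dimensional function $\psi_k(t) := f(y_{k-1} + t \sigma_k u_k \otimes v_k)$ is convex, so midpoint convexity gives
\[ f(y_k) \;\ge\; 2 f(y_{k-1}) - f(z_k) \;\ge\; -\,(2C_{k-1}+1)\,G(x), \]
closing the induction. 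At $k=K$ this becomes $f(x) \ge -(2^{\min(m,n)-1}-1)\,G(x)$, the desired dimensional lower bound.

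The main obstacle is the exponential factor $2^K$ created by iterating the reflection step: each application doubles the coefficient coming from the previous one. It is therefore essential to use a decomposition of minimal length, which for rank-one convex $f$ is exactly what the SVD provides thanks to the freedom to use arbitrary rank-one directions. A coordinate-direction decomposition into matrices $e_i \otimes e_j$---the only option under mere separate convexity (cf.\ Remark~\ref{rem.separate convex})---uses up to $mn$ summands and so yields a larger, but still dimensional, constant.
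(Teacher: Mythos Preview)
Your proof is correct and follows the same route as the paper's: decompose $x-x_0$ into mutually Frobenius-orthogonal rank-one summands, then iterate the reflection inequality $f(y_k)\ge 2f(y_{k-1})-f(z_k)\ge 2f(y_{k-1})-G(x)$, using at the base that the first rank-one segment passes through $x_0$ where $f$ and $Df$ vanish. The only difference is cosmetic: the paper decomposes column by column, $x-x_0=\sum_{j=1}^{n}(x-x_0)_{\cdot j}\otimes e_j$, whereas you use the SVD, which shortens the chain to $\operatorname{rank}(x-x_0)\le\min(m,n)$ steps and hence yields the marginally sharper constant $C=2^{\min(m,n)-1}-1$ in place of the paper's $2^{n-1}-1$.
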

\begin{proof} Passing to $x\mapsto f(x_0+x)$ and $\Omega_{x_0}=\Omega-x_0$ we may assume that $x_0=0$.

Let $x\in \R^{m\times n}$ be given, let $x_i$ be the matrix of the first $i$th columns of $x$ i.e. $x_i = (x_{\cdot1},\dotsc, x_{\cdot i}, 0 \dotsc0)$. Furthermore we define the rank-one matrices build by the $i$th column $d_i= x_{\cdot i }\otimes e_i $. Hence we have $x_i=\frac12 x_{i+1} +\frac12 y_{i+1}$, where $y_i=x_i-2d_i$. Thus the rank-one convexity of $f$ implies that 
\[ 2 f(x_i) \le f(x_{i+1}) + f(y_i) \le f(x_{i+1}) + G(x)\,,\]
using for the second inequality that $f\le G$ and the monotonicity of $g$ together with $|y_i|=|x_i|\le |x|$ for all $i$. 

Finally, one has $f(x_1) \ge f(0)+ Df(0) x_1 \ge 0$ since $x_1$ is a rank-one matrix. Thus we can use the above inequality to deduce \eqref{e.lower bound} inductively, since $x_n =x$.
\end{proof}

\begin{proof}[Proof of the Proposition]
Recall that every rank-one convex function is locally Lipschitz, with the quantitative estimate
\begin{equation}\label{e.Lipschitzbound}
    \operatorname{Lip}(f,B_r(x)) \le n \frac{\operatorname{osc}(f, B_{2r}(x))}{r}\,.
\end{equation}
Due to Rademacher's theorem $f$ is therefore a.e. differentiable i.e. there $N \subset B_2$ with $|N|=0$ and $f$ is differentiable on $B_2\setminus N$ with $|Df(x)| \le C \norm{f}_{L^\infty(B_1)}$ for all $x\in (B_{\nicefrac{3}{2}}\setminus N)$. 
\medskip

\emph{Step 1: $f_{x_{ij}}=\partial_{x_{ij}}f$ can be touched from above and below by a cone with an opening of $CA$ on the set $\Omega_A=\{\overline{\Theta}_f\le A\}\cap (B_{\nicefrac{1}{2}}\setminus N)$}
\smallskip

Given $x_0 \in \Omega_A$ we consider the rank-one convex function $\tilde{f}=f-f(x_0)-Df(x_0)(x-x_0)$. By construction $\tilde{f}(x_0)=0, D\tilde{f}(x_0)=0$ 
and since $x_0 \in \Omega_A$ we have $\tilde{f}(x) \le \frac{A}{2}|x-x_0|^2$ on $B_1$. In particular, this implies that the assumptions of Lemma \ref{lem.upper lower bound} are satisfied with $g(t)=\frac{A}{2}t^2$. 
Hence $\lVert\tilde{f}\rVert_{L^\infty(B_r(x_0))}\le CA r^2$ for all $r<\nicefrac{1}{2}$. 
Combining it with \eqref{e.Lipschitzbound} we deduce for any $x\in B_1$ with $r=|x-x_0|<\nicefrac{1}{4}$ that
\[|Df(x)-Df(x_0)|= |D\tilde{f}(x)|\le \operatorname{Lip}(\tilde{f},\overline{B}_{r}(x_0))\le C \frac{\lVert\tilde{f}\rVert_{L^\infty(B_{2r}(x_0))}}{r} \le CA\,r\,.\]

\emph{Step 2: $f_{x_{ij}}$ is differentiable a.e. in $\Omega_A$} 
\smallskip

The following argument is our version of Maly's beautiful argument \cite{Maly1999}, which unfortunately we only obtained afterwards. However, as his article is difficult to obtain, we have decided to present our version nonetheless. Our version uses a sub- and super-convolution that is closer to the one used by Jensen \cite{Jensen}.

We consider the sup- and inf-convolution with cones i.e. we define on $B_{\nicefrac{1}{2}}$ for $L=2C \max\{ A,\norm{f}_{L^\infty(B_1)}\} $ 
\begin{align*}
    w_{x_{ij}}^-(x)=\inf\{ f_{x_{ij}}(y)+L |x-y| \colon y \in B_{\nicefrac{3}{4}}\setminus N\}\\
    w_{x_{ij}}^+(x)=\sup\{ f_{x_{ij}}(y)- L |x-y| \colon y \in B_{\nicefrac{3}{4}}\setminus N\}\,.
\end{align*}
Firstly, we note that $\inf, \sup$ are essentially taken over $|x-y|<\frac14$, since $|y-x|\ge \frac14$ by our choice of $L$ and the Lipschitz estimate, \eqref{e.Lipschitzbound}, we have for every $x \in B_{\nicefrac{1}{2}}\setminus N$ that 
\[ f_{x_{ij}}(x) \le C \norm{f}_{L^\infty(B_1)} < f_{x_{ij}}(y)+L |x-y|\,.\]
Hence the infimum in $w_{x_{ij}}^-$ can be taken only over $|y-x|<\frac14$ as claimed. 
Secondly, its not difficult to see that $w_{x_{ij}}^- \le f_{x_{ij}} \le w_{x_{ij}}^+$ and that $w_{x_{ij}}^\pm$ are $L$-Lipschitz continuous. Furthermore, Step 1 implies that $w_{x_{ij}}^-=f_{x_{ij}}=w_{x_{ij}}^+$ on $\Omega_A$. Hence $w_i^\pm$ for a.e. are differntiable $x_0\in \Omega_A$ but since they are ordered their differential must agree, i.e. $Dw_i^\pm(x_0)=M$ for some $M\in \R^{m\times n}$. This implies the differentiablity of $f_{x_{ij}}$ in $x_0$, again because they are ordered $w_{x_{ij}}^- \le f_{x_{ij}} \le w_{x_{ij}}^+$. 

\emph{Step 3: $f$ is second-order differentiable at a.e. point of $\Omega_A$.}
\smallskip

Let $x_0\in \Omega_A$ be a point in which all $f_{x_{ij}}$ are differentiable, then for any $z$ with $|z|\le \nicefrac{1}{4}$ we have by the fundamental theorem of calculus, which is valid for Lipschitz functions: using Einstein summation we have
\begin{align*}
    &f(x_0+z) - (f(x_0)+Df(x_0)z+\frac12Df_{x_{ij}}(x_0)z_{ij}z)\\&=\int_0^1 \left(f_{x_{ij}}(x_0+sz)-f_{x_{ij}}(x_0)-Df_{x_{ij}}(x_0)\right)z_{x_{ij}}(sz)\,ds\\& = o(|z|^2)\,.
\end{align*}

\end{proof}

\appendix
\section{A measure theoretic argument for the convex case}
Alexandrov's original theorem can essentially be reduced to two observations. Firstly, the one-dimensional situation can be reduced to the fact that monotone functions are differentiable almost everywhere (a result already discovered by Lebesgue). Secondly, the convex hull of the set $\{ \pm h e_j \colon j=1,\dotsc, n\}$ contains the open ball $B_{\nicefrac{h}{\sqrt{n}}}$. 

More precisely, we use a quantitative version of the almost everywhere differentiability of monotone functions — which was probably also known to Lebesgue — and the second observation to derive \eqref{e.interior W2epsilon} with $\epsilon=1$. In the following, $Q_r(x)$ denotes the cube $x+Q_r$, where $Q_r=[-r,r]^n$.

\emph{Measure theoretic ingredient: } Given a Borel measure $\mu$ on $\R$ we consider the associated maximal function i.e. 
\[ M\mu(x)= \sup\left\{ \frac{\mu(I)}{|I|} \colon x \in I =(a,b)\right\}\,.\]
The classical maximal-function estimate/ Lebesgue differentiation estimate states 
\[ |\{ M\mu > t \}|\lesssim \frac{\mu(\R)}{t}.\]
Applying the above to the truncated measure $\tilde{\mu}=\mu\res[-2,2]$ gives a localized version: for every $t>\mu[-2,2]$ and any interval $I$ with $\mu(I)> t|I|$ we have $|I|<1$ hence we deduce that 
\[ |\{M(\mu)>t\}\cap[-1,1]|\le |\{M\tilde{\mu}>t\}| \lesssim \frac{\mu[-2,2]}{t}\]

\emph{Application to the Hessian of a convex function: } 
Let $f''$ be the Radon measure provided by the second derivative of a convex function $f$ on $\R$. then we have 
\[ f''[-2,2]=f'(2)-f'(-2) \le 2 \operatorname{osc}(f, [-3,3])\,.\]
Furthermore, if $f(0)$ with $0 \in \partial f(0)$ for instance by approximation with mollification, one extends the classical Taylor approximation to 
\begin{equation}\label{e.taylor for convex}
    0\le f(h)\le f''[0,h] h \le Mf''(0) \,h^2 \text{ and  }0\le f(-h)\le f''[-h,0] h \le Mf''(0)\,h^2 \quad \forall h >0\,.
\end{equation}

\emph{Derivation of \eqref{e.interior W2epsilon}: } For a fixed direction $e_i$ and $y \in e_i^\perp$ we may consider the convex function $s\mapsto f_y(s)=f(y+s e_i)$ and the associated set $E_y=\{ Mf_y''>t\}\cap [-1,1]$. From the above, we have 
\[|E_y|\lesssim t^{-1} \operatorname{osc}(f_y,[-3,3])\,.\]
Hence, for each $t>2\operatorname{osc}(f,Q_3)$ we can combine them to $E_t=N\cup\bigcup_{i=1}^n E_i \subset Q_1$, where $E_i=\bigcup_{y \in e_i^\perp \cap Q_1} E_y$ and $N$ is the set of measure zero where $f$ is not differentiable. Using Fubini, we can estimate its measure from above by
\[ |E\cup N \cap Q_1| \lesssim \frac{\operatorname{osc}(f,Q_3)}{t}\,.\]
It remains to show that for $t>2\operatorname{osc}(f,Q_3)$
\[\{\overline{\Theta}_f>4nt\}\cap Q_1 \subset E_t\,.\]
Since $f$ is differentiable in $x_0$ for any given $x_0 \in E_t$ we may consider 
\[\tilde{f}(x)=f(x_0+x)-f(x_0)-Df(x_0)x\,.\]
Again, since $x_0 \in E_t$ we can apply \eqref{e.taylor for convex} to $s\mapsto \tilde{f}(se_i)$ for a fixed direction $e_i$to deduce that 
\[ 0\le \tilde{f}(he_i)\le 2t \,h^2 \quad \forall |h|<1\,.\]
Furthermore, for any $x \in K_h=\operatorname{conv}\{ \pm h e_i\colon i=1\dotsc n\}$ we deduce appropriate $0\le \lambda_i,\mu_i\le 1$ that $0\le \tilde{f}(x)\le \sum_i \left(\lambda_i f(he_i)+\mu_i f(-he_i)\right)\le 2t\, h^2$. But since $B_{\nicefrac{h}{\sqrt{n}}} \subset K_h$ we deduce the claim.

\section*{Acknowledgment}
I am deeply grateful to Riccardo Tione and Daniel Faraco for listening patiently to my unstructured thoughts. I would also like to thank Zhuolin Li for bringing the problem to my attention and Pawe\l{} Goldstein for making me aware that this is not the first approach to use viscosity techniques. Finally, I'd like to thank Bernd Kirchheim for hinting me to Maly's amazing idea, encouraging me to include the appendix, and thereby substantially improving the first draft.
\bibliographystyle{plain}
\bibliography{lit}
\end{document}